%


\documentclass{imsart}


\arxiv{arXiv:2004.05028}

\startlocaldefs
\usepackage{amsmath}
\usepackage{amsthm}
\usepackage{amsfonts}
\usepackage{amssymb}
\usepackage[margin=1in]{geometry}
\theoremstyle{definition}
\usepackage{graphicx}

\newtheorem{theorem}{Theorem}[section]
\newtheorem{corollary}[theorem]{Corollary}

\newtheorem{lemma}[theorem]{Lemma}

\newtheorem{remark}[theorem]{Remark}

\usepackage{enumitem}
\numberwithin{equation}{section}
\usepackage{color}
\usepackage{wasysym}
\usepackage{morefloats}
\usepackage[square,numbers]{natbib}

\newcommand{\sign}{\text{sign}}

\endlocaldefs

\begin{document}

\begin{frontmatter}

\title{Minimal $\mathcal L^p$-Densities with Prescribed Marginals}

\runtitle{Minimal Densities with Prescribed Marginals}

\begin{aug}

\author{\fnms{Paolo} \snm{Guasoni}\thanksref{a}\ead[label=e1]{paolo.guasoni@dcu.ie}}
\and
\author{\fnms{Eberhard} \snm{Mayerhofer}\corref{}\thanksref{b}\ead[label=e2]{eberhard.mayerhofer@ul.ie}}
\and
\author{\fnms{Mingchuan} \snm{Zhao}\thanksref{b}\ead[label=e3]{mingchuan.zhao@ul.ie}}

\address[a]{Dublin City University, School of Mathematical Sciences, Glasnevin, Dublin, Ireland. \newline \printead{e1}}
\address[b]{University of Limerick, Department of Mathematics and Statistics, Castletroy, Ireland. \newline \printead{e2,e3}}
\runauthor{Guasoni, Mayerhofer and Zhao}

\end{aug}

\begin{abstract}

We derive sharp lower bounds for $\mathcal L^p$-functions on the $n$-dimensional unit hypercube in terms of their $p$-ths marginal moments. Such bounds are the unique solutions of a system of constrained nonlinear integral equations depending on the marginals. For square-integrable functions, the bounds have an explicit expression in terms of the second marginals moments.
\end{abstract}

\begin{keyword}
\kwd{Multivariate distributions}
\kwd{Banach spaces}
\kwd{Integral equations}
\kwd{Sharp estimates}\end{keyword}

\begin{keyword}[class=MSC2010]
\kwd[Primary ]{26H05}
\kwd[; secondary ]{52A21; 31B10}
\end{keyword}


\end{frontmatter}

\section{Introduction}

This paper obtains lower bounds of $\mathcal L^p$-functions on the $n$-dimensional unit hypercube in terms of moments of their marginals (i.e., one-dimensional projections). The lower bound is identified as the solution of a system of constrained nonlinear integral equations, where the marginals appear as inhomogeneous data. In the Hilbertian case $p=2$ (and only in such a case), the integral equations become linear and admit an explicit solution (Section \ref{sec: l2}), implying that any square-integrable function $g: \mathbb{R}^n \ni \xi \mapsto g(\xi) \in \mathbb{R}$ such that $\int g(\xi)d\xi = 1$, satisfies the bound
\begin{equation}\label{eq:l2bound}
\int g^2(\xi)d\xi\geq 
\left(\sum_{i=1}^n \int g_i(\xi_i)^2 d\xi_i\right) -(n-1)
,
\end{equation}
where $\xi = (\xi_i)_{1\le i\le n}$ and the marginal $g_i$ is the integral of $g$ with respect to all but the $i$-ths argument $\xi_i$, $1\leq i\leq n$. Henceforth, all integrals are understood on the unit (hyper)cube of the respective integration variable, unless explicitly stated otherwise.

This bound is reminiscent of the lower Fr\'echet-Hoeffding bound \citep[Theorem 2.2.3]{nelsen} for a copula $C=C(\xi)$ (i.e., the joint cumulative distribution function of $n$-dimensional random variable with uniform marginals),
\begin{equation}\label{eq frechet}
C(\xi)\geq \max\left\{\sum_{i=1}^n \xi_i-(n-1),0\right\}.
\end{equation}
However, a closer inspection shows that such a bound is rather different from the ones considered in this paper. First, \eqref{eq frechet} is a point-wise bound (which reduces to state mere positivity on the set $\{\sum_{i=1}^n \xi_i<n-1\}$), while \eqref{eq:l2bound} is a norm bound. Second, while copula bounds are -- by construction -- independent of marginal distributions, the norm bound established in this paper depends critically on the marginals densities considered.

The results of this paper are originally motivated by questions arising in the optimisation of options portfolios \citep{guasoni2017options}, which are equivalent, by convex duality, to the minimisation of the second moment of the stochastic discount factor, subject to constraints on its marginals. This paper investigates the more challenging setting of an arbitrary moment (as opposed to the second moment), which leads to two related issues. First, the problem cannot be tackled with Hilbert space techniques, but requires arguments from Banach space theory. Second, while the second moment leads to linear first-order conditions and a minimal density that is additive across different variables, the general case considered here entails as first-order conditions nonlinear integral equations and a resulting more complex nonlinear decomposition of the minimal density.

The rest of the paper is organised as follows. Section \ref{sec3} sets up the general $\mathcal L^p$-problem and offers a heuristic derivation of the equations governing the lower estimate. The rigorous proof of the sharp lower bound follows in Theorem \ref{th: main4}. This general statement is then applied to the case $p=2$ in Section \ref{sec: l2}, and results in Theorem \ref{th: main2}. Section \ref{sec: reg} reinterprets Theorem \ref{th: main4} as a regularity result for nonlinear integral equations. Section \ref{sec: num} provides a numerical study of the minimal solutions to the integral equations in dimension two, discussing how the optimisers depart from the linear case $p=2$. Finally, Section \ref{sec: prob} explains the 
probabilistic implications for portfolio selection.

\section{$\mathcal L^p$-bounds}\label{sec3}
Let $1<p<\infty$ and $n\geq 2$. Denote by $\mathcal L^p([0,1]^n)$ the space of equivalence classes of Lebesgue-measurable functions $f$
on the unit hypercube $[0,1]^n$, for which
$\| f\|_p:=\left(\int \vert f(\xi)\vert^p d\xi\right)^{\frac{1}{p}}<\infty$. $\mathcal L^p$-spaces are strictly convex\footnote{This property is also called rotund, cf. \citep[Example 1.10.2 and Theorem 1.11.10]{megginson2012introduction}} in that for any $0<t<1$ and $f,\,g \in\mathcal L^p([0,1]^n)$ such that $\|f\|_p=\|g\|_p=1$ and $f\neq g$, it holds that $\|tf+(1-t)g\|_p<1$. As $\mathcal L^p$-spaces are also reflexive, the following Banach-space analog \citep[Corollary 5.1.19]{megginson2012introduction} of a familiar Hilbert space result \citep[Theorem 4.10]{rudin2006real} holds:
\begin{theorem}\label{th1: Lp}
If a normed space is rotund and reflexive, then each of its nonempty, convex, closed subsets has a unique minimal element of smallest norm.
\end{theorem}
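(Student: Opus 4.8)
The plan is to treat existence and uniqueness separately, writing $X$ for the ambient space, $C$ for the nonempty closed convex subset, and $d := \inf_{x \in C}\|x\| \ge 0$ for the distance from the origin to $C$. The existence claim asks for a point of $C$ realizing this infimum, and the uniqueness claim that there is exactly one such point.

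For existence, I would start from a minimizing sequence $(x_k)\subset C$ with $\|x_k\|\to d$. Such a sequence is norm-bounded, so reflexivity guarantees that it has a subsequence converging weakly to some $x_0\in X$, since bounded sets in a reflexive space are relatively weakly compact and one may pass to a weakly convergent subsequence. Two facts then pin down $x_0$. First, a norm-closed convex set is weakly closed (Mazur's theorem), so $x_0\in C$. Second, the norm is weakly lower semicontinuous, being convex and norm-continuous, so $\|x_0\|\le\liminf_k\|x_k\|=d$. Combined with $\|x_0\|\ge d$, which holds because $x_0\in C$, this gives $\|x_0\|=d$, exhibiting an element of smallest norm.

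For uniqueness, suppose $x_0,x_1\in C$ both attain the minimum. If $d=0$ then both equal $0$ and there is nothing to prove, so assume $d>0$. Convexity of $C$ places the midpoint $(x_0+x_1)/2$ in $C$, whence $\|(x_0+x_1)/2\|\ge d$. On the other hand, $x_0/d$ and $x_1/d$ lie on the unit sphere, so if $x_0\neq x_1$ then rotundity, applied with $t=1/2$, yields $\|(x_0+x_1)/(2d)\|<1$, that is $\|(x_0+x_1)/2\|<d$, contradicting the previous inequality. Hence $x_0=x_1$.

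The essential work lies in the existence half, where reflexivity and closed convexity must be combined through weak-topology arguments, namely weak sequential compactness of bounded sets, Mazur's theorem, and weak lower semicontinuity of the norm. Uniqueness, by contrast, is an immediate consequence of strict convexity once a minimizer is known to exist, and it is precisely this uniqueness that will later single out a well-defined minimal $\mathcal{L}^p$-density.
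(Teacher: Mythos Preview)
Your proof is correct. The paper does not supply its own proof of this theorem; it merely quotes the result from Megginson's textbook \emph{An Introduction to Banach Space Theory} (Corollary 5.1.19), so there is no in-paper argument to compare against. Your approach is the standard one and matches how the cited reference proceeds: weak sequential compactness of bounded sets (via reflexivity and Eberlein--\v{S}mulian), weak closedness of closed convex sets (Mazur), and weak lower semicontinuity of the norm for existence; strict convexity for uniqueness. One small remark: you might mention explicitly that a reflexive normed space is automatically complete (since the canonical embedding into the bidual is an isometric surjection onto a Banach space), which is what licenses the use of Eberlein--\v{S}mulian; otherwise the argument is complete as written.
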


Henceforth, for $1\leq i\leq n$ denote by $\xi_i$ the $i$-th coordinate of $\xi\in\mathbb R^n$ and by $\xi^{c}_i$ the $(n-1)$-vector, in which the latter is omitted, i.e., $\xi_i^c=(\xi_1,\dots,\xi_{i-1},\xi_{i+1},\dots,\xi_n)$. We shall integrate functions $f=f(\xi)$ either with respect to $\xi_i^c$, in which case we write $f_i(\xi_i):=\int f(\xi)d\xi_i^c$, or with respect to $\xi_i$, and then we write $\int f(\xi)d\xi_i$.

\subsection{Heuristic Derivation}
To minimise the $\mathcal L^p$-norm $\|h\|_p$ subject to the marginal constraints
\[
\int h(\xi)d\xi_i^c=g_i(\xi_i),\quad 1\leq i\leq n,
\]
consider the Lagrangian
\[
L=\frac{1}{p}\int \vert h(\xi)\vert^pd\xi-\frac{1}{n}\sum_{i=1}^n \int\Phi(\xi_i)\left(\int h(\xi)d\xi_i^c-g_i(\xi_i)\right)d\xi_i.
\]
Setting the directional derivatives equal to zero yields the first order conditions
\[
\sign(h(\xi))\vert h(\xi)\vert^{p-1}=\frac{1}{n}\sum_{i=1}^n \Phi_i(\xi_i),
\]
whence
\[
h(\xi)=\sign\left(\sum_{i=1}^n\Phi_i(\xi_i)\right)\left\vert \frac{1}{n}\sum_{i=1}^n \Phi_i(\xi_i) \right\vert^{\frac{1}{p-1}}.
\]
The marginal constraints imply
\begin{equation}\label{eq: constraints marginals p}
\int \sign\left(\frac{1}{n}\sum_{j=1}^n\Phi_j(\xi_j)\right)\left\vert 
\frac{1}{n}\sum_{j=1}^n\Phi_j(\xi_j)
\right\vert^{\frac{1}{p-1}} d\xi_i^c=g_i(\xi_i),\quad 1\leq i\leq n.
\end{equation}
To uniquely identify\footnote{For the uniqueness proof, see the proof of Theorem \ref{th: main4}.} the Lagrange multipliers $\Phi_i$ -- which are otherwise determined up to an additive constant -- it suffices to impose the conditions
\begin{equation}\label{eq: super constraint}
\int \Phi_i(\xi_i) d\xi_i=0,\quad 2\leq i\leq n.
\end{equation}
Note that these conditions are required only for $i\geq 2$.
\subsection{Main result}\label{sec: main}
The discussion begins with a characterisation of minimality in Banach spaces \citep[Theorem 4.21]{shapiro2006topics}:
\begin{lemma}\label{lemx}
Let $1<p<\infty$, $f\in\mathcal L^p$, and $Y$ be a closed subspace of $\mathcal L^p$. The following are equivalent:
\begin{enumerate}
\item $\|f\|_p\leq \|f+k\|_p$ for all $k\in Y$.
\item $\int\sign(f(\xi)) \vert f(\xi)\vert^{p-1}k(\xi) d\xi=0$ for all $k\in Y$.
\end{enumerate}
\end{lemma}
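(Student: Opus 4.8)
The plan is to prove the equivalence of the two conditions in Lemma~\ref{lemx} by exploiting convexity of the map $k \mapsto \|f+k\|_p^p$ together with the characterisation of its Gâteaux derivative. The functional $J(k) := \frac{1}{p}\|f+k\|_p^p$ is convex on the Banach space $\mathcal L^p$, and condition (1) asserts precisely that $0$ is a global minimiser of $J$ restricted to the closed subspace $Y$. Since $J$ is convex, this minimisation is governed by a first-order condition, and the heart of the matter is to compute the directional derivative of $J$ and show it equals the integral expression appearing in condition (2).

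First I would establish that $J$ is Gâteaux differentiable with derivative
\[
\langle J'(k), v\rangle = \int \sign\bigl(f(\xi)+k(\xi)\bigr)\,\bigl|f(\xi)+k(\xi)\bigr|^{p-1}\,v(\xi)\,d\xi,
\]
for all $v \in \mathcal L^p$. This is the standard Fréchet/Gâteaux derivative of the $p$-norm functional, and the main technical point is justifying differentiation under the integral sign; one differentiates the integrand $\frac{1}{p}|f+tv|^p$ in $t$, obtaining $\sign(f+tv)|f+tv|^{p-1}v$, and controls it via a dominated-convergence argument, using that $|f+tv|^{p-1} \in \mathcal L^{p'}$ with $p' = p/(p-1)$ the conjugate exponent, so that the integrand is dominated uniformly for small $t$ by an $\mathcal L^1$ function (an application of Hölder's inequality guarantees integrability of the product). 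Evaluating at $k=0$ gives $\langle J'(0), v\rangle = \int \sign(f)|f|^{p-1}v\,d\xi$, which is exactly the expression in condition (2).

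With the derivative in hand, the equivalence follows from elementary convex analysis. For the direction (1)$\Rightarrow$(2): if $0$ minimises $J$ over the subspace $Y$, then for any fixed $k \in Y$ the scalar function $t \mapsto J(tk)$ has a minimum at $t=0$, so its derivative vanishes there, giving $\langle J'(0), k\rangle = 0$, i.e.\ condition (2). For the converse (2)$\Rightarrow$(1): convexity of $J$ yields the subgradient inequality $J(k) \geq J(0) + \langle J'(0), k\rangle$ for all $k$; since $\langle J'(0), k\rangle = 0$ for $k \in Y$ by hypothesis, we obtain $J(k) \geq J(0)$, that is $\|f+k\|_p^p \geq \|f\|_p^p$, which is condition (1). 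I would note that condition (2) is stated for all $k \in Y$, and since $Y$ is a linear subspace the vanishing of the linear functional $\langle J'(0), \cdot\rangle$ on $Y$ is precisely what both directions require.

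The main obstacle I anticipate is the rigorous justification of the Gâteaux differentiability, specifically the uniform domination needed to pass the derivative inside the integral when $f$ or $k$ is unbounded. The difficulty is that the pointwise derivative of $t \mapsto |f+tk|^p$ involves $|f+tk|^{p-1}$, which could be large on sets where $f+tk$ is large; the remedy is a convexity estimate of the form $\bigl||a+b|^p - |a|^p\bigr| \leq p\,|b|\,\bigl(|a|+|b|\bigr)^{p-1}$, whose right-hand side is integrable by Hölder once $f, k \in \mathcal L^p$. Since this equivalence is attributed to a standard reference \citep[Theorem 4.21]{shapiro2006topics}, one may alternatively cite it directly; but the self-contained convexity argument above is short and avoids appealing to the general smoothness theory of $\mathcal L^p$ spaces.
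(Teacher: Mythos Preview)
Your proof is correct. Note, however, that the paper does not supply its own proof of this lemma: it merely cites \citep[Theorem~4.21]{shapiro2006topics} and states the result. Your proposal therefore goes beyond what the paper does, providing a self-contained convex-analysis argument (Gâteaux differentiability of $k\mapsto\tfrac{1}{p}\|f+k\|_p^p$ plus the subgradient inequality) in place of a bare citation. The argument is standard and sound; the domination estimate you give, $\bigl||a+b|^p-|a|^p\bigr|\le p\,|b|\,(|a|+|b|)^{p-1}$, combined with H\"older's inequality, is exactly what is needed to pass the limit through the integral, and the two implications then follow as you describe. You already anticipate the option of simply citing the reference, which is precisely the route the paper takes.
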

A function $f$ is \emph{orthogonal} to a subspace $Y$ if it satisfies any of the equivalent statements of Lemma \ref{lemx}.
\begin{lemma}\label{lem: lem2}
Let $1<p<\infty$, $f\in\mathcal L^q([0,1]^n)$, where $q=p/(p-1)$, and denote by
\begin{equation}\label{eq: super}
\mathcal N:=
\left\{\phi\in \mathcal L^p([0,1]^n)\Big| \int \phi(\xi)d\xi_i^c\equiv 0, \text{ for } 1\le i\le n\right\}.
\end{equation}
The following are equivalent:
\begin{enumerate}
\item \label{eq: part 1xx} $\int f(x)\phi(x)dx=0$ for all $\phi\in\mathcal N$.
\item \label{eq: part 2xx} $f(x)=\frac{1}{n}\sum_{i=1}^n \Psi_i(x_i)$, where $\Psi_i$ lie in $\mathcal L^q([0,1])$, $1\leq i\leq n$.
\end{enumerate}
\end{lemma}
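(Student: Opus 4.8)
The plan is to prove the two implications separately, the second being the substantial one. For the direction \eqref{eq: part 2xx}$\Rightarrow$\eqref{eq: part 1xx}, suppose $f(x)=\frac1n\sum_{i=1}^n\Psi_i(x_i)$ with $\Psi_i\in\mathcal L^q([0,1])$, and take any $\phi\in\mathcal N$. Viewing $\Psi_i$ as a function on the full cube (constant in $x_i^c$) it lies in $\mathcal L^q([0,1]^n)$ with the same norm, so by H\"older each product $\Psi_i(x_i)\phi(x)$ is integrable. Fubini then gives
\[
\int \Psi_i(x_i)\phi(x)\,dx=\int \Psi_i(x_i)\left(\int\phi(x)\,dx_i^c\right)dx_i=0 ,
\]
since the inner marginal vanishes for $\phi\in\mathcal N$; summing over $i$ yields $\int f\phi\,dx=0$.

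The core of the lemma is \eqref{eq: part 1xx}$\Rightarrow$\eqref{eq: part 2xx}, and here I would introduce the additive (ANOVA-type) projection. For $h\in\mathcal L^r([0,1]^n)$, $1\le r<\infty$, write $h_i(x_i):=\int h(x)\,dx_i^c$ and $\bar h:=\int h(x)\,dx$, and set
\[
(Ph)(x):=\sum_{i=1}^n h_i(x_i)-(n-1)\bar h .
\]
I would record four properties. First, $P$ maps $\mathcal L^r$ into itself, because each marginal operator contracts the $\mathcal L^r$-norm (Jensen's inequality for conditional expectations). Second, a direct computation using $\int h_i(x_i)\,dx_i=\bar h$ shows that $Ph$ has exactly the same marginals as $h$; in particular $Ph=h$ whenever $h$ is additive, so $P$ is a projection whose range is precisely the additive functions. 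Third, since $\phi\in\mathcal N$ means all marginals of $\phi$ vanish, one gets $\mathcal N=\{\phi\in\mathcal L^p:P\phi=0\}=(I-P)\mathcal L^p$. Fourth, and crucially, $P$ is self-adjoint for the $(\mathcal L^q,\mathcal L^p)$ duality: integrating out $x_i^c$ first gives
\[
\int (Pf)\,\psi\,dx=\sum_{i=1}^n\int f_i(x_i)\psi_i(x_i)\,dx_i-(n-1)\bar f\,\bar\psi=\int f\,(P\psi)\,dx ,
\]
where $\psi_i,\bar\psi$ are the marginals and mean of $\psi$, the expression being manifestly symmetric in $f$ and $\psi$.

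With these in hand the conclusion is immediate. For an arbitrary $\psi\in\mathcal L^p$ we have $(I-P)\psi\in\mathcal N$, so hypothesis \eqref{eq: part 1xx}, together with self-adjointness of $P$, yields
\[
0=\int f\,(I-P)\psi\,dx=\int (f-Pf)\,\psi\,dx .
\]
Since this holds for every $\psi\in\mathcal L^p$ and $f-Pf\in\mathcal L^q=(\mathcal L^p)^*$, testing against $\psi=\sign(f-Pf)\,|f-Pf|^{q-1}\in\mathcal L^p$ forces $\int|f-Pf|^q\,dx=0$, hence $f=Pf$, which is additive of the required form. The main obstacle is the careful Fubini bookkeeping needed to establish the marginal identities and the self-adjointness of $P$; I also anticipate the subtlety that $f-Pf$ need not belong to $\mathcal L^p$ when $p>2$, so one cannot simply test $\mathcal N$-orthogonality against $f-Pf$ directly — routing the argument through self-adjointness of $P$ and arbitrary $\psi\in\mathcal L^p$ sidesteps this cleanly.
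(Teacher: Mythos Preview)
Your proof is correct and follows essentially the same route as the paper: the projection $P$ you introduce is exactly the map $\phi\mapsto\phi-\widetilde\phi$ that the paper uses (with $\widetilde\phi:=\phi-\sum_i\int\phi\,d\xi_i^c+(n-1)\int\phi\,d\eta\in\mathcal N$), and your self-adjointness identity is precisely the Fubini step the paper invokes to pass from $\int f\,\widetilde\phi\,dx=0$ to $\int(f-Pf)\,\phi\,dx=0$ for all $\phi\in\mathcal L^p$. The only difference is presentational: you name $P$ and its properties explicitly, whereas the paper carries out the same computation in two lines.
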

\begin{proof}
The implication \eqref{eq: part 2xx} $\Rightarrow$ \eqref{eq: part 1xx} is straightforward. To show that \eqref{eq: part 1xx} $\Rightarrow$ \eqref{eq: part 2xx}, note that, by Jensen's inequality, for any $\phi\in \mathcal L^p([0,1]^n)$ and any $1\leq i\leq n$, $\int \phi(\xi)d\xi_i^c\in \mathcal L^p([0,1])$. Hence, for any $\phi\in \mathcal L^p([0,1]^n)$, 
\[
\widetilde\phi(\xi):=\phi(\xi)-\sum_{i=1}^n\int \phi(\xi)d\xi_i^c+(n-1)\int \phi(\eta)d\eta
\in\mathcal N,
\]
and Fubini's theorem yields
\[
\int \left(f(x)-\sum_{i=1}^n \int f(\xi)d\xi_i^c+(n-1)\int f(\xi)d\xi\right)\phi(x)dx=0.
\]
Thus by duality, 
\[
f(\xi)=\sum_{i=1}^n \int f(\xi)d\xi_i^c-(n-1)\int f(\xi)d\xi
\qquad\xi-\text{a.e.}.
\]
The functions $\Phi_i(\xi_i):=n\int f(\xi)d\xi_i^c-(n-1)\int f(\xi)d\xi$, $1\leq i\leq n$, are in $\mathcal L^q([0,1]^n)$, and they sum
to $f$, as claimed.
\end{proof}
The previous two Lemmas combine to the following:
\begin{corollary}\label{cor: 1}
Let $f\in\mathcal L^q([0,1]^n)$, and $\mathcal N\subset\mathcal L^p([0,1]^n)$ as defined in \eqref{eq: super}. The following are equivalent:
\begin{enumerate}
\item \label{eq: part 0x} $\sign(f)\vert f\vert^{1/(p-1)}$ is orthogonal to $\mathcal N$.
\item \label{eq: part 1x} $\int f(x)\phi(x)dx=0$ for all $\phi\in\mathcal N$.
\item \label{eq: part 2x} $f(x)=\frac{1}{n}\sum_{i=1}^n \Psi_i(x_i)$, where $\Psi_i$ lie in $\mathcal L^q([0,1]^n)$, $1\leq i\leq n$.
\end{enumerate}
\end{corollary}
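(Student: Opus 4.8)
The plan is to note that the equivalence $\eqref{eq: part 1x}\Leftrightarrow\eqref{eq: part 2x}$ is exactly the statement of Lemma \ref{lem: lem2}, so nothing further is needed for that half. The whole task thus reduces to establishing $\eqref{eq: part 0x}\Leftrightarrow\eqref{eq: part 1x}$, which I expect to follow from a single change of the unknown function combined with Lemma \ref{lemx}, with the work being entirely in the bookkeeping of the exponents $1/(p-1)$ and $p-1$.

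First I would introduce $g:=\sign(f)\vert f\vert^{1/(p-1)}$ and verify that $g$ lies in $\mathcal L^p$, so that Lemma \ref{lemx} may legitimately be applied with $Y=\mathcal N$. This is the one place that needs the hypothesis $f\in\mathcal L^q$: since $\vert g\vert^p=\vert f\vert^{p/(p-1)}=\vert f\vert^q$, one has $\int\vert g\vert^p\,d\xi=\int\vert f\vert^q\,d\xi<\infty$, whence $g\in\mathcal L^p$. Next I would unwind the definition of orthogonality. By Lemma \ref{lemx} applied to $g$, condition \eqref{eq: part 0x} is equivalent to $\int\sign(g(\xi))\vert g(\xi)\vert^{p-1}\phi(\xi)\,d\xi=0$ for all $\phi\in\mathcal N$. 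The key simplification is that $\sign(g)=\sign(f)$, because $\vert f\vert^{1/(p-1)}\geq 0$, while $\vert g\vert^{p-1}=\vert f\vert^{(p-1)/(p-1)}=\vert f\vert$; hence $\sign(g)\vert g\vert^{p-1}=\sign(f)\vert f\vert=f$. Substituting this identity turns the orthogonality relation into $\int f(\xi)\phi(\xi)\,d\xi=0$ for all $\phi\in\mathcal N$, which is precisely \eqref{eq: part 1x}. Since every step is a genuine equivalence, the two implications are obtained simultaneously and the chain closes.

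The only genuine subtleties, and hence the steps I would flag, are first that $\mathcal N$ must be a closed subspace of $\mathcal L^p$ for Lemma \ref{lemx} to apply; this is clear because $\mathcal N$ is the intersection of the kernels of the continuous maps $\phi\mapsto\int\phi\,d\xi_i^c$ (continuity following from the Jensen estimate already used in the proof of Lemma \ref{lem: lem2}). Second, one must confirm that $f\mapsto\sign(f)\vert f\vert^{1/(p-1)}$ and $g\mapsto\sign(g)\vert g\vert^{p-1}$ are mutually inverse, which is exactly the cancellation of the reciprocal exponents computed above and also accounts for the case $f=0$, where both expressions vanish. Neither point is deep, but both should be stated cleanly so that the reduction to the two preceding results is rigorous.
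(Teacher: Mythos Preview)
Your proposal is correct and follows exactly the route the paper intends: the corollary is stated there as an immediate consequence of Lemmas~\ref{lemx} and~\ref{lem: lem2}, and you have simply written out the bookkeeping (that $g:=\sign(f)\vert f\vert^{1/(p-1)}\in\mathcal L^p$ and that $\sign(g)\vert g\vert^{p-1}=f$) that makes the combination explicit. Your side remarks on the closedness of $\mathcal N$ and the inversion of the power maps are accurate and appropriate.
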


\begin{theorem}\label{th: main4}
Let $p>1$. Any $g\in\mathcal L^p$ satisfies
\begin{equation}\label{eq: estimate a}
\int \vert g(\xi)\vert ^pd\xi\geq \int\vert\overline\Phi(\xi)\vert^{\frac{p}{p-1}}d\xi,
\end{equation}
where 
\[
\overline \Phi(\xi):=\frac{1}{n}\sum_{i=1}^n \Phi_i(\xi_i)
\]
and $\Phi_i$ are the unique solutions of the system of integral equations \eqref{eq: constraints marginals p}--\eqref{eq: super constraint}.

The estimate \eqref{eq: estimate a} is sharp, in that equality in \eqref{eq: estimate a}  holds, if and only if
\begin{equation}\label{gp}
g(\xi)=\sign\left(\overline \Phi(\xi)\right)\left\vert\overline \Phi(\xi)\right\vert^{ \frac{1}{p-1} }.
\end{equation}
\end{theorem}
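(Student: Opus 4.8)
The plan is to read the bound as a norm-minimisation problem over the affine set of functions sharing the marginals of $g$, and then to identify the minimiser explicitly through the orthogonality characterisation of Lemma \ref{lemx} and the additivity characterisation of Corollary \ref{cor: 1}. Throughout I take $g_i(\xi_i):=\int g(\xi)\,d\xi_i^c$ to be the marginals of $g$, so that these are the data entering \eqref{eq: constraints marginals p}. First I would introduce the affine set
\[
\mathcal A:=\Big\{h\in\mathcal L^p : \int h(\xi)\,d\xi_i^c=g_i(\xi_i),\ 1\le i\le n\Big\}.
\]
Because each marginal map $h\mapsto\int h\,d\xi_i^c$ is bounded and linear from $\mathcal L^p([0,1]^n)$ into $\mathcal L^p([0,1])$ (Jensen's inequality, as in the proof of Lemma \ref{lem: lem2}), the set $\mathcal A$ is nonempty (it contains $g$), convex, and closed, and satisfies $\mathcal A=g+\mathcal N$ with $\mathcal N$ the closed subspace of \eqref{eq: super}. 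Theorem \ref{th1: Lp} then supplies a \emph{unique} element $h^\ast\in\mathcal A$ of smallest norm, and $\|g\|_p\ge\|h^\ast\|_p$ for every competitor in $\mathcal A$.

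Next I would determine the shape of $h^\ast$. Since $\mathcal A=h^\ast+\mathcal N$, minimality of $h^\ast$ is equivalent, by Lemma \ref{lemx}, to orthogonality of $h^\ast$ to $\mathcal N$, i.e. $\int\sign(h^\ast)\vert h^\ast\vert^{p-1}\phi\,d\xi=0$ for all $\phi\in\mathcal N$. The function $F:=\sign(h^\ast)\vert h^\ast\vert^{p-1}$ lies in $\mathcal L^q$ because $\vert F\vert^q=\vert h^\ast\vert^p$ is integrable, so Corollary \ref{cor: 1} (the implication \eqref{eq: part 1x}$\Rightarrow$\eqref{eq: part 2x}) produces an additive representation $F=\tfrac1n\sum_{i=1}^n\Phi_i(\xi_i)$ with $\Phi_i\in\mathcal L^q$. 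Inverting $F=\sign(h^\ast)\vert h^\ast\vert^{p-1}$ gives $h^\ast=\sign(F)\vert F\vert^{1/(p-1)}$, which is precisely \eqref{gp} with $\overline\Phi=F$; substituting this into the marginal constraints $\int h^\ast\,d\xi_i^c=g_i$ recovers exactly \eqref{eq: constraints marginals p}.

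I would then enforce the normalisation and establish uniqueness of the multipliers. The additive decomposition is determined only up to shifts $\Phi_i\mapsto\Phi_i+c_i$ with $\sum_i c_i=0$, which leave $\overline\Phi$ and hence $h^\ast$ unchanged; taking $c_i=-\int\Phi_i\,d\xi_i$ for $2\le i\le n$ and $c_1=-\sum_{i\ge2}c_i$ makes \eqref{eq: super constraint} hold. For uniqueness, if two families $(\Phi_i)$ and $(\Phi_i')$ both solve \eqref{eq: constraints marginals p}--\eqref{eq: super constraint}, the associated functions $\sign(\overline\Phi)\vert\overline\Phi\vert^{1/(p-1)}$ and $\sign(\overline\Phi')\vert\overline\Phi'\vert^{1/(p-1)}$ each lie in $\mathcal A$ and are orthogonal to $\mathcal N$ (being built from additive $\overline\Phi,\overline\Phi'$), so each coincides with the unique minimiser $h^\ast$; thus $\overline\Phi=\overline\Phi'$ and $\sum_i(\Phi_i-\Phi_i')\equiv0$. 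Integrating out all but the $i$-th coordinate shows each $\Phi_i-\Phi_i'$ is constant, and \eqref{eq: super constraint} forces these constants to vanish.

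Finally, the estimate and its sharpness follow at once: $\|h^\ast\|_p^p=\int\vert\overline\Phi\vert^{p/(p-1)}d\xi$ together with $\|g\|_p\ge\|h^\ast\|_p$ yields \eqref{eq: estimate a}, while equality forces $\|g\|_p=\|h^\ast\|_p$ with $g\in\mathcal A$, hence $g=h^\ast$ by the uniqueness in Theorem \ref{th1: Lp}, which is \eqref{gp}; the converse is a direct computation. I expect the main obstacle to be the passage from the abstract minimiser to closed form, namely verifying $F\in\mathcal L^q$ so that Corollary \ref{cor: 1} applies, and then handling the non-uniqueness of the additive decomposition cleanly, so that \eqref{eq: super constraint} can be imposed \emph{and} is exactly enough to pin the $\Phi_i$ down uniquely.
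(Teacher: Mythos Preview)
Your proposal is correct and follows essentially the same route as the paper: set up the affine constraint set, invoke Theorem \ref{th1: Lp} for a unique norm-minimiser, use Lemma \ref{lemx} and Corollary \ref{cor: 1} to obtain the additive form of $\sign(h^\ast)|h^\ast|^{p-1}$, normalise via \eqref{eq: super constraint}, and deduce uniqueness from the uniqueness of the minimiser. Your closedness argument (continuity of the marginal maps) is in fact cleaner than the paper's use of Vitali, and your uniqueness step is the same in substance---just note explicitly that \eqref{eq: super constraint} kills the constants $\Phi_i-\Phi_i'$ only for $i\ge2$, and the remaining one vanishes because the sum of all constants is zero.
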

\begin{proof}
By Jensen's inequality, $g_i:=\int g(\xi)d\xi_i^c\in\mathcal L^p([0,1])$ for $1\leq i\leq n$, hence
the set
\[
\mathcal M:=
\left\{h\in \mathcal L^p([0,1]^n) \Big| \int h(\xi) d\xi_i^c= g_i(\xi_i),
\quad 1\leq i\leq n\right\}
\]
is well-defined, and it is non-empty because $g\in\mathcal M$. The set is convex, by construction, and it is closed: Let $h_n\in\mathcal M$ and $\lim_{n\rightarrow \infty} h_n=h$ in $\mathcal L^p([0,1]^n)$. Then the sequence $(h_n)_{n\ge 1}$ is uniformly integrable, hence by Vitali's convergence theorem, $\xi_i$- almost everywhere,
\[
\int h(\xi)d\xi_i^c=\int \lim_{n\rightarrow\infty} h_n(\xi)d\xi_i^c=\lim_{n\rightarrow\infty }\int h_n(\xi)d\xi^c=g_i(\xi_i).
\]
Denote by $h_*$ the unique element in $\mathcal M$ of smallest norm\footnote{See Theorem \ref{th1: Lp}, and the paragraph preceding it concerning its applicability for $\mathcal L^p$ spaces.}. We claim that $h_*=g$, where $g$ is defined in \eqref{gp}. To this end, recall the function space defined in \eqref{eq: super}. By the minimality of $h_*$, it follows that for any $\varepsilon>0$ and any $\phi\in \mathcal N$
\begin{equation}\label{eq: min}
\|h_*\pm \varepsilon \phi\|_p^p-\|h_*\|_p^p\geq 0,
\end{equation}
and therefore, by Lemma \ref{lemx}\footnote{Note that $\vert h_*\vert^{p-1}\in L^q$, where $q=\frac{p}{p-1}$, hence the below pairing is finite, by H\"older's inequality.}
\[
\int \sign(h_*(\xi))\vert h_* (\xi)\vert^{p-1} \phi(\xi)d\xi=0,\quad \phi\in\mathcal N.
\]
 The implication \eqref{eq: part 0x} $\Rightarrow$ \eqref{eq: part 2x} in Corollary \ref{cor: 1} yields
\[
\sign(h_*(\xi))\vert h_*(\xi)\vert^{p-1}=\overline \Phi(\xi), \quad \text{where}\quad \overline\Phi(\xi):=\frac{1}{n}\sum_{i=1}^n \Phi_i(\xi_i),
\]
with measurable functions $\Phi_i(\xi_i)$, $1\le i\le n$, depending on one variable $\xi_i$ only. Because $\sign(h_*)=\sign(\overline\Phi(\xi))$, it follows that
\[
h_*(\xi_1,\dots,\xi_n)=\sign( \overline\Phi(\xi))\left\vert\overline\Phi(\xi)\right\vert^{\frac{1}{p-1}}
\]
and $\overline \Phi$ solves the nonlinear integral equations \eqref{eq: constraints marginals p} for $1\leq i\leq n$. As these equations involve the sum $\overline \Phi$ only, we can satisfy the extra constraints \eqref{eq: super constraint}, by replacing $\Phi_i$ by $\Phi_i-\int \Phi_i(\xi_i)d\xi_i$ ($2\leq i\leq n$), if necessary.

It remains to show the uniqueness. Assume that, besides $\overline \Phi$, also $\overline\Psi(\xi):=\frac{1}{n}\sum_{i=1}^n\Psi_i(\xi_i)$ solves \eqref{eq: constraints marginals p}--\eqref{eq: super constraint}. By Corollary \ref{cor: 1} \eqref{eq: part 2x} $\Rightarrow$ \eqref{eq: part 0x}, the function $h:=\sign(\overline\Psi)\vert \overline\Psi\vert^{\frac{1}{p-1}}$
is orthogonal to $\mathcal N$ defined in \eqref{eq: super}. Furthermore, by \eqref{eq: constraints marginals p}, $h-h_*\in\mathcal N$, hence by definition of orthogonality, 
$\|h\|_p\leq \|h_*\|_p$. In view of \eqref{eq: min}, $h=h_*$, whence also $\overline \Psi=\overline \Phi$. As $\overline\Phi(\xi)=\frac{1}{n}\sum_{i=1}^n \Phi_i(\xi_i)=\frac{1}{n}\sum_{i=1}^n\Psi_i(\xi_i)=:\overline\Psi(\xi)$ almost everywhere, the extra constraints \eqref{eq: super constraint} yield, upon integration of $n\overline \Phi=n\overline\Psi$ with respect to $d\xi_1^c$, that 
$\Phi_1(\xi_1)=\Psi_1(\xi_1)$ almost everywhere (the rest of the integrals vanish). Applying the constraint for $i=2$, it follows that
\[
\int \Phi_1(\xi_1)d\xi_1+\Phi_2(\xi_2)+0=\int \Psi_1(\xi_1)d\xi_1+\Psi_2(\xi_2)+0=\int \Phi_1(\xi_1)d\xi_1+\Psi_2(\xi_2),
\]
whence $\Phi_2(\xi_2)=\Psi_2(\xi_2)$ $\xi_2$-almost everywhere. Continuing similarly for $3\le i\le n$, it follows that $\Phi_i=\Psi_i$ $\xi_i$-almost everywhere for $3\leq i\leq n$.
\end{proof}

\begin{remark}\label{rem: constant marginals}
Suppose all marginals are densities. If $n-1$ marginal data are uniform densities, say $g_i(\xi_i)=1$ for $i\in \mathcal T\subset\{1,2,\dots,n\}$ with $\vert T\vert\geq n-1$, then the corresponding
solutions $\Phi_i$ ($i\in T$) are constants. This is easiest to see for $T=\{2,\dots,n\}$. Then, it can be verified that the solution of \eqref{eq: constraints marginals p}--\eqref{eq: super constraint} is of the form
$\Phi_i=0$ for $2\leq i\leq n$, and $\Phi_1:=\sign(g_1)\vert g_1\vert^{p-1}$. If the index set $T$ contains $1$, then $\Phi_1$ will be a constant, but not necessarily equals to $0$ (see last row of Figure \ref{figall}).

\end{remark}

\subsection{The $\mathcal L^2$-case}\label{sec: l2}
For $p=2$, the system of integral equations \eqref{eq: constraints marginals p}--\eqref{eq: super constraint} becomes linear:
\begin{align}\label{eqy1}
\left(\int\sum_{j=1}^n\Phi_j(\xi_j)\right)d\xi_i^c&=n g_i(\xi_i),\quad 1\leq i\leq n,\\\label{eqy2}
\int \Phi_i(\xi_i)d\xi_i&=0,\quad 2\leq i\leq n.
\end{align}
For $i=1$, Equation \eqref{eqy1} yields $\Phi_1(\xi_1)=n g_1(\xi_1)$, and in conjunction with Equation \eqref{eqy2}, it follows that
\[
\Phi_i(\xi_i)=n \left( g_i(\xi_i)-\int g_1(\xi_1)d\xi_1\right )=n \left( g_i(\xi_i)-\int g(\xi)d\xi\right ),\quad 2\leq i\leq n.
\]
In view of Theorem \ref{th: main4}, it follows that:
\begin{theorem}\label{th: main2}
Any $g\in\mathcal L^2([0,1]^n)$ satisfies
\begin{equation}\label{eq: estimate1}
\int g^2(\xi)d\xi\geq 
\sum_{i=1}^n 
\int g_i(\xi_i)^2 d\xi_i
-(n-1)\left(\int g(x)dx\right)^2.
\end{equation}
The bound is sharp: equality holds if and only if $g$ equals
\[
\overline \Phi(\xi):=\left(\sum_{i=1}^n g_i(\xi_i)\right)-(n-1)\int g(x)dx .
\]
\end{theorem}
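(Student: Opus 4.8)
The plan is to obtain this statement as a direct specialization of Theorem \ref{th: main4} to the case $p=2$, for which $q=p/(p-1)=2$ and the exponent $1/(p-1)=1$. At this value the nonlinear integral equations \eqref{eq: constraints marginals p} degenerate into the linear system \eqref{eqy1}--\eqref{eqy2} recorded above, the general bound \eqref{eq: estimate a} becomes $\int g^2\,d\xi\geq\int\overline\Phi^2\,d\xi$, and the saturation clause \eqref{gp} reads simply $g=\overline\Phi$ (since $\sign(\overline\Phi)\vert\overline\Phi\vert^{1/(p-1)}=\overline\Phi$). Thus both the inequality and its equality set are inherited from Theorem \ref{th: main4}, and the only remaining work is to identify $\overline\Phi$ explicitly and to evaluate $\int\overline\Phi^2\,d\xi$.

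First I would solve the linear system, which is essentially carried out in the paragraph preceding the statement. Integrating $\sum_j\Phi_j(\xi_j)$ over $\xi_i^c$ leaves $\Phi_i(\xi_i)$ untouched and replaces each $\Phi_j$ ($j\neq i$) by its one-dimensional mean $c_j:=\int\Phi_j\,d\xi_j$; the normalization \eqref{eq: super constraint} forces $c_j=0$ for $j\geq 2$. The case $i=1$ then yields $\Phi_1=ng_1$, while for $i\geq 2$ one gets $\Phi_i=n(g_i-m)$ with $m:=\int g\,dx$, using $c_1=\int\Phi_1\,d\xi_1=n\int g_1\,d\xi_1=nm$. Averaging gives the closed form
\[
\overline\Phi(\xi)=\frac1n\sum_{i=1}^n\Phi_i(\xi_i)=\sum_{i=1}^n g_i(\xi_i)-(n-1)m,
\]
which is the claimed minimizer; as a consistency check, integrating $\overline\Phi$ over $\xi_i^c$ returns $g_i$, because each off-diagonal marginal contributes $m$ and the $(n-1)$ such terms exactly cancel the constant $(n-1)m$.

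The core computation is then to evaluate $\int\overline\Phi^2\,d\xi$. Writing $S:=\sum_i g_i(\xi_i)$ and expanding $(S-(n-1)m)^2=S^2-2(n-1)mS+(n-1)^2m^2$, I would integrate term by term over the cube. By Fubini each diagonal term $\int g_i^2\,d\xi$ reduces to $\int g_i^2\,d\xi_i$, while each of the $n(n-1)$ off-diagonal terms factorizes as $(\int g_i\,d\xi_i)(\int g_j\,d\xi_j)=m^2$, and $\int S\,d\xi=nm$. Collecting the coefficients of $m^2$ gives $n(n-1)-2n(n-1)+(n-1)^2=-(n-1)$, so that
\[
\int\overline\Phi^2\,d\xi=\sum_{i=1}^n\int g_i(\xi_i)^2\,d\xi_i-(n-1)m^2,
\]
which is exactly the right-hand side of \eqref{eq: estimate1}.

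I do not expect a genuine obstacle here beyond bookkeeping. The only points demanding care are confirming that the constants attached to $m^2$ telescope to precisely $-(n-1)$, and that the two appeals to Fubini — reducing diagonal terms to one-dimensional integrals and factorizing the off-diagonal products — are legitimate; the latter holds because each marginal $g_i$ lies in $\mathcal L^2([0,1])$ by Jensen's inequality, exactly as invoked at the start of the proof of Theorem \ref{th: main4}.
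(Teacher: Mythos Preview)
Your proposal is correct and follows essentially the same approach as the paper: specialize Theorem \ref{th: main4} to $p=2$, solve the resulting linear system \eqref{eqy1}--\eqref{eqy2} for the $\Phi_i$, and read off the bound. You have in fact filled in the explicit computation of $\int\overline\Phi^2\,d\xi$ that the paper leaves to the reader.
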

\begin{remark}
Note that the above includes the case where $g$ has vanishing integral. In this case, the estimate \eqref{eq: estimate1}
lacks the last term $(n-1)\int g(x)dx$.
\end{remark}
\subsection{Regularity of Integral Equations}\label{sec: reg}
One can view Theorem \ref{th: main4} as a regularity result for nonlinear integral equations with constraints -- the dual problem:
\begin{corollary}
Let $n\geq 2$ and $p>1$. For any $g\in\mathcal L^p([0,1]^n)$, there exists a unique solution 
\[
\Phi(\xi)=(\Phi_1(\xi_1),\Phi_2(\xi_2),\dots,\Phi_n(\xi_n))
\]
of the integral equations \eqref{eq: constraints marginals p}--\eqref{eq: super constraint} with data $g$, satisfying the bound
\[
\|\overline \Phi\|_{q}\leq \|g\|_p,
\]
where $q$ denotes conjugate exponent (satisfying $1/p+1/q=1$). 
\end{corollary}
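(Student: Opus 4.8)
The plan is to read the Corollary directly off Theorem \ref{th: main4}: that theorem already contains all the substance, and the only genuinely new content is the a priori bound, which I would extract from the minimality of the optimiser together with a single pointwise identity linking $\overline\Phi$ to that optimiser. First I would dispose of existence and uniqueness. Given the data $g$, I form its marginals $g_i$ and the constraint set $\mathcal M$ exactly as in the proof of Theorem \ref{th: main4}. That proof produces the unique minimal-norm element $h_*\in\mathcal M$ and exhibits
\[
\overline\Phi=\frac1n\sum_{i=1}^n\Phi_i=\sign(h_*)\vert h_*\vert^{p-1},
\]
with the $\Phi_i$ the unique solutions of \eqref{eq: constraints marginals p}--\eqref{eq: super constraint}. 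Hence existence and uniqueness of $\Phi$ require no new argument; they are precisely the existence and uniqueness already established in Theorem \ref{th: main4}, the normalisation \eqref{eq: super constraint} being what removes the additive ambiguity in the $\Phi_i$ and pins down the solution.

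For the bound the engine is twofold. On the one hand, the data $g$ itself lies in $\mathcal M$, so by minimality of $h_*$ one has $\|h_*\|_p\le\|g\|_p$. On the other hand, from $\sign(h_*)\vert h_*\vert^{p-1}=\overline\Phi$ I would derive the pointwise identity $\vert h_*\vert^p=\vert\overline\Phi\vert^{p/(p-1)}=\vert\overline\Phi\vert^{q}$, using $q=p/(p-1)$. Integrating over the cube turns this into the norm identity $\int\vert\overline\Phi\vert^q\,d\xi=\int\vert h_*\vert^p\,d\xi$, and combining the two facts yields
\[
\int\vert\overline\Phi\vert^q\,d\xi=\int\vert h_*\vert^p\,d\xi\le\int\vert g\vert^p\,d\xi ,
\]
which is the scale-free form of the asserted estimate. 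An equivalent route, avoiding any reference to $h_*$, is to pair $\overline\Phi$ with $g$: since $h_*-g\in\mathcal N$ and $\overline\Phi$ is orthogonal to $\mathcal N$, one gets $\int\overline\Phi\,g\,d\xi=\int\overline\Phi\,h_*\,d\xi=\int\vert h_*\vert^p\,d\xi=\int\vert\overline\Phi\vert^q\,d\xi$, after which Hölder's inequality gives $\int\vert\overline\Phi\vert^q\,d\xi\le\|\overline\Phi\|_q\,\|g\|_p$.

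The one step that needs care — and which I would flag as the crux — is the passage from the integral inequality to the norm inequality as written. Because the two sides carry conjugate rather than equal exponents, taking $q$-th roots converts the identity $\|\overline\Phi\|_q^q=\|h_*\|_p^p$ into $\|\overline\Phi\|_q=\|h_*\|_p^{p/q}=\|h_*\|_p^{p-1}$, so that minimality delivers the estimate in the form in which it is genuinely verified. At the Hilbertian exponent $p=q=2$ this collapses to exactly $\|\overline\Phi\|_q\le\|g\|_p$, consistently with Theorem \ref{th: main2}; for general $p$ the invariant content of the bound is the displayed integral inequality $\int\vert\overline\Phi\vert^q\le\int\vert g\vert^p$, and it is this inequality, equivalent in the second paragraph's pairing to $\|\overline\Phi\|_q^{q-1}\le\|g\|_p$, on which the regularity interpretation of the dual problem rests. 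I would therefore organise the write-up so that the existence/uniqueness reduction is stated as immediate, the norm identity and minimality are the two workhorses, and the final exponent reconciliation is carried out explicitly, since that is the single point at which the precise power on the right-hand side is decided.
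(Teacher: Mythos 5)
Your proposal is correct, and it follows exactly the route the paper intends: the Corollary is stated without proof as a re-reading of Theorem \ref{th: main4}, with existence and uniqueness taken verbatim from that theorem and the bound extracted from the pointwise identity $\vert h_*\vert^{p}=\vert\overline\Phi\vert^{q}$ combined with $\|h_*\|_p\le\|g\|_p$, which holds because $g\in\mathcal M$. The ``crux'' you flag is genuine, and it is a defect of the paper's statement rather than of your argument: what the derivation actually yields is $\|\overline\Phi\|_q^q=\|h_*\|_p^p\le\|g\|_p^p$, equivalently $\|\overline\Phi\|_q\le\|g\|_p^{p-1}$, and the literal bound $\|\overline\Phi\|_q\le\|g\|_p$ fails in general for $p\neq 2$. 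A concrete counterexample: take $n=2$, $p=3$ (so $q=3/2$), and $g\equiv c$ with $c>1$; then $h_*=g$, $\overline\Phi=c^{2}$, and $\|\overline\Phi\|_{3/2}=c^{2}>c=\|g\|_3$. Your alternative Hölder route — using $h_*-g\in\mathcal N$ and orthogonality of $h_*$ to $\mathcal N$ to get $\int\vert\overline\Phi\vert^q\,d\xi=\int\overline\Phi\,g\,d\xi\le\|\overline\Phi\|_q\|g\|_p$ — lands on the same corrected exponent $\|\overline\Phi\|_q^{q-1}\le\|g\|_p$, confirming that the scale-invariant content is $\int\vert\overline\Phi\vert^q\,d\xi\le\int\vert g\vert^p\,d\xi$; at $p=q=2$ the two formulations coincide, which is presumably how the misprint in the Corollary arose. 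In short: your write-up is a proof of the corrected statement, and your explicit exponent reconciliation is exactly the step the paper elides.
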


\section{Numerical Examples}\label{sec: num}

For illustration, we study dimension $n=2$ and consider data $g_1(\xi_1)$ and $g_2(\xi_2)$ of unit integral.

To solve the nonlinear integral equations \eqref{eq: constraints marginals p}--\eqref{eq: super constraint}, we discretise the involved integrals using a mesh-size of $1/30$, 
and solve the resulting nonlinear equations with the optim solver on R (minimising residuals), using the exact solution in the $p=2$ case as starting value; and then increasing (resp. decreasing) from the Hilbertian case by $\Delta p=10\%$, using repeatedly the previous numerical solution as seed for the solver. Each example involves normal marginal densities, re-normalised so to have unit mass on $[0,1]$.\footnote{It may appear more natural to pick densities that integrate to one by default, e.g. instances of the beta distribution with parameters $\alpha,\beta$. However, this seemingly more natural choice leads to similar effects as explained below. For intuitive and for numerical reasons, we found normal data more convenient.}

Figure \ref{figall} below depicts solutions for three different sets of data $g_1$, $g_2$, and for a range of $p>1$. The explicit solution for $p=2$ (Theorem \ref{th: main2})
 \[
\Phi_1(\xi_1)=2g_1(\xi_1),\quad \Phi_2(\xi_2)=2 (g_2(\xi_2)-1).
\]
is plotted by dotted lines. The Figure generally shows that as $p\downarrow 1$, the solutions $\Phi_1,\Phi_2$ get flatter (dashed lines), while as $p$ increases from $2$, the solutions have more and more pronounced peaks (solid lines). In the first row of Figure \ref{figall} the data is identical, however since $\int\Phi_2(\xi_2)d\xi_2=0$, $\Phi_1$ and $\Phi_2$ differ by a real constant. Moreover, the second row of the Figure uses inhomogeneous data $g_1, g_2$, centering around different means. Finally, the last row of Figure \ref{figall} confirms the theoretical finding of Remark \ref{rem: constant marginals}: the solution $\Phi_1$ on the left, that corresponds to uniform marginal data $g_1$, is constant also for $p\neq 2$.

\begin{figure}[h]
\begin{center}
\includegraphics[width=0.9\textwidth]{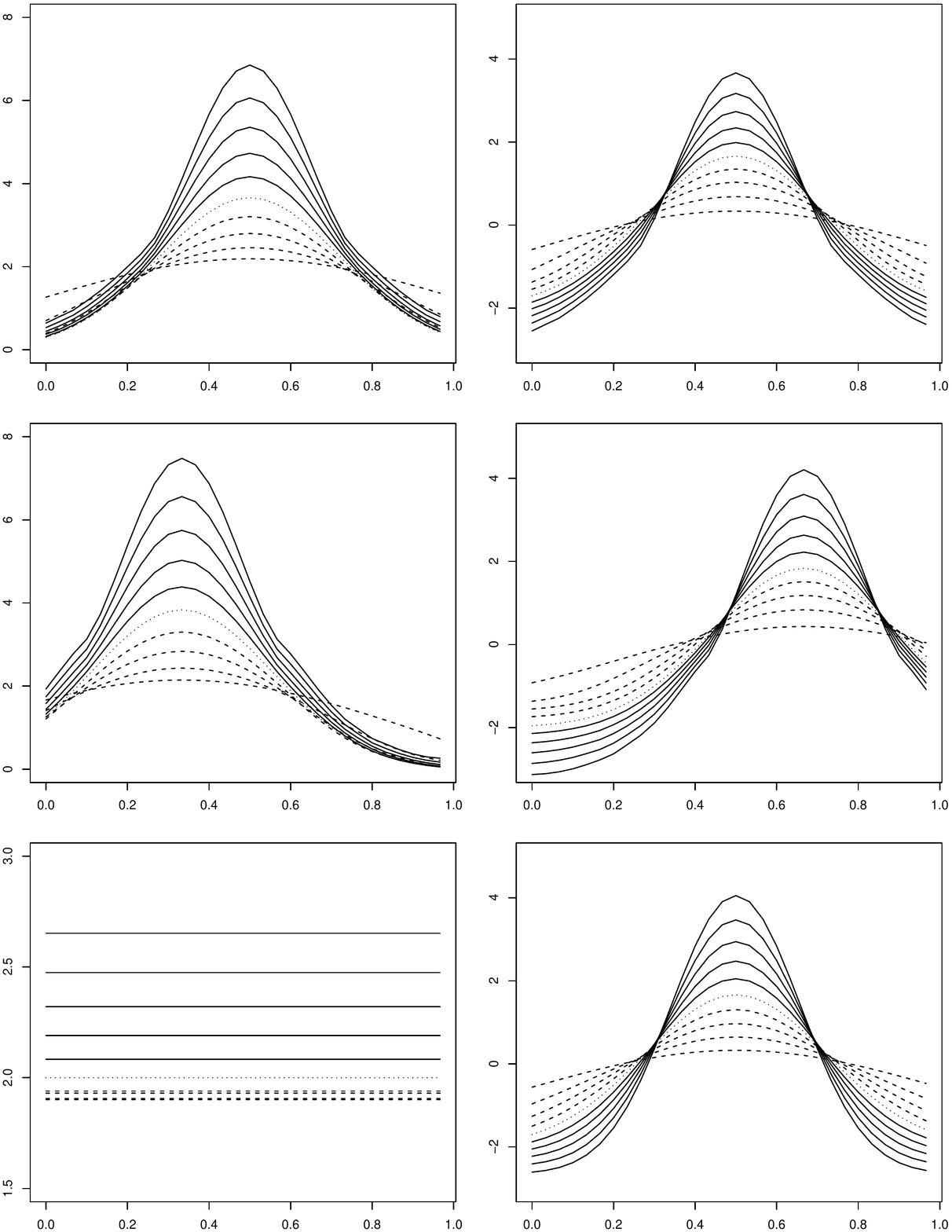}\label{fig6}
\caption{In each row, we depict the solutions of the nonlinear integral equations \eqref{eq: constraints marginals p}--\eqref{eq: super constraint}; on the left panels $\Phi_1$ and on the right one $\Phi_2$, respectively. The dotted lines depict the explicit solution in the $p=2$ case; the other lines depart from the Hilbertian case by $\Delta p=20\%$ steps; the dashed lines depict the solutions for $p=1.8,1.6,1.4$ and $1.2$. Similarly, the solid lines depict the numerical solutions for $p=2.2, 2.4,\dots,3$. In the first row, both marginals are Gaussian densities with equal parameter $\mu=1/2$ and $\sigma^2=10\%$, truncated and re-normalised so  to have unit mass on $[0,1]$.  In the second row, the means are varied to $\mu=1/3$ on the left, and $\mu=2/3$ on the right. In the third row, $g_1=1$ (uniform density). }
\label{figall}
\end{center}
\end{figure}

\section{Probabilistic Implications}\label{sec: prob}
The main result in this paper is originally motivated by the problem of selecting the portfolio of options with maximal Sharpe ratio, which is equivalent, by convex duality, to the minimisation of the second moment of the stochastic discount factor, subject to constraints on its marginals (cf. \citep [Section 2.2]{guasoni2017options}). In this financial application, the baseline measure is described by a probability density $p(\xi)$, which identifies the views of an investor on the joint distribution of the prices of $n$ risky assets $\mathcal S^1,\dots, \mathcal S^n$ at some future date $T$ (modelled by random variables $S^1,\dots, S^n$).  Each of these is an underlying asset of European Call and Put options with a continuum of strikes, maturing at $T$. The density $p$ thus replaces the Lebesgue measure in the present paper. The corresponding $\mathcal L^p$-problem ($p>1$) generalises equations \eqref{eq: constraints marginals p}--\eqref{eq: super constraint} to
\begin{align*}
\int \sign\left(\frac{1}{n}\sum_{j=1}^n\Phi_j(\xi_j)\right)\left\vert \frac{1}{n}\sum_{j=1}^n\Phi_j(\xi_j)\right\vert^{\frac{1}{p-1}} p(\xi)d\xi_i^c=g_i(\xi_i),\quad 1&\leq i\leq n,\\
\int \Phi_i(\xi_i) p_i(\xi_i)d\xi_i=0,\quad 2&\leq i\leq n.
\end{align*}
The new weight $p$ in these equations, and its $n$ marginal densities $p_i$ constitute only a small modification that can be treated similarly as in Section \ref{sec: main}. The ``inhomogeneous data'' $g_i$ ($1\leq i\leq n$) represents the risk-neutral marginal distributions of asset $S^i$ ($1\leq i\leq n$) which is, due to \citep{breeden1978prices,nachman1988spanning,green1987spanning} proportional to the second derivative of European Call options prices,
with respect to their strike price. 

An important implication of the result is that the solution is necessarily of the functional form $g(\xi)=\sign(\overline \Phi(\xi)\left| \overline\Phi(\xi)\right|^{1/(p-1)}$, where $\overline\Phi(\xi)=\frac{1}{n}\sum_{j=1}^n\Phi_j(\xi_j)$. First, this result casts in a cautionary light the practice of pricing complex derivatives by fitting parametric copulas to risk-neutral
marginals. In general, the price obtained from some copula family is neither replicable nor linked to an optimisation objective.
Second, the result implies that the maximal random payoff $\overline\Phi$ (the primal object) is not an arbitrary function of the the asset prices $S^1,\dots, S^n$, but it additively separates in each asset, that is, it is of the form
\[
\overline \Phi(S_1,\dots,S_n)=\frac{1}{n}\sum_{j=1}^n\Phi_j(S_j),
\]
and thus can be interpreted as an option portfolio, featuring European options on each of the $n$ individual underlyings. Indeed, according to the Carr-Madan formula \citep{carr2001towards}, each summand $\Phi_j$, if regular enough, can be written as
\begin{align*}
\Phi_j(K)&=\Phi_j(K_0)+\Phi_j'(K_0)(K-K_0)+\int_0^{K_0}\Phi_j''(\kappa)(\kappa-K)^+d\kappa\\&\qquad+\int_{K_0}^\infty \Phi_j'(\kappa)(K-\kappa)^+d\kappa,\quad K\geq 0
\end{align*}
and thus decomposes into the payoff of $\Phi_j(K_0)$ units of a zero-coupon bond maturing at $T$ (first summand), $\Phi_j'(K_0)$ forward contracts with delivery price $K_0$ (second summand) and a continuum of European puts (with strike $\kappa\leq K_0$), and European calls (with strikes $\kappa\geq K_0$).\footnote{The strike $K_0>0$ that separates puts from calls is arbitrary, and typically equal to the spot or forward price.}

\section*{Acknowledgements}
Paolo Guasoni was partially supported by SFI (16/IA/4443,16/SPP/3347). Eberhard Mayerhofer thanks Natalia Kopteva for suggestions concerning Section \ref{sec: reg}. Mingchuan Zhao acknowledges funding through SFI/12/RC/2289 P2.
\bibliographystyle{plainnat}

\end{document}